\DeclareMathOperator{\topo}{top}
\DeclareMathOperator{\todd}{Todd}
\DeclareMathOperator{\BS}{BS}
\DeclareMathOperator{\an}{an}
\DeclareMathOperator{\ind}{ind}
\DeclareMathOperator{\LL}{L}
\DeclareMathOperator{\FL}{FL}
\DeclareMathOperator{\exact}{exact}
\DeclareMathOperator{\dr}{dr}
\DeclareMathOperator{\spin}{spin}
\DeclareMathOperator{\odd}{odd}
\DeclareMathOperator{\even}{even}
\DeclareMathOperator{\im}{Im}
\DeclareMathOperator{\ch}{ch}
\DeclareMathOperator{\U}{U}
\DeclareMathOperator{\BU}{BU}
\DeclareMathOperator{\ho}{Hom}
\DeclareMathOperator{\str}{Str}
\DeclareMathOperator{\CS}{CS}
\begin{document}
\setlength{\baselineskip}{1.0\baselineskip}
\newtheorem{defi}{Definition}
\newtheorem{remark}{Remark}
\newtheorem{coro}{Corollary}
\newtheorem{exam}{Example}
\newtheorem{thm}{Theorem}
\newtheorem{prop}{Proposition}
\newcommand{\wt}[1]{{\widetilde{#1}}}
\newcommand{\ov}[1]{{\overline{#1}}}
\newcommand{\wh}[1]{{\widehat{#1}}}
\newcommand{\poin}{Poincar$\acute{\textrm{e }}$}
\newcommand{\deff}[1]{{\bf\emph{#1}}}
\newcommand{\boo}[1]{\boldsymbol{#1}}
\newcommand{\abs}[1]{\lvert#1\rvert}
\newcommand{\norm}[1]{\lVert#1\rVert}
\newcommand{\inner}[1]{\langle#1\rangle}
\newcommand{\poisson}[1]{\{#1\}}
\newcommand{\biginner}[1]{\Big\langle#1\Big\rangle}
\newcommand{\set}[1]{\{#1\}}
\newcommand{\Bigset}[1]{\Big\{#1\Big\}}
\newcommand{\BBigset}[1]{\bigg\{#1\bigg\}}
\newcommand{\dis}[1]{$\displaystyle#1$}
\newcommand{\R}{\mathbb{R}}
\newcommand{\N}{\mathbb{N}}
\newcommand{\Z}{\mathbb{Z}}
\newcommand{\Q}{\mathbb{Q}}
\newcommand{\E}{\mathcal{E}}
\newcommand{\G}{\mathcal{G}}
\newcommand{\F}{\mathcal{F}}
\newcommand{\V}{\mathcal{V}}
\newcommand{\W}{\mathcal{W}}
\newcommand{\SSS}{\mathcal{S}}
\newcommand{\h}{\mathbb{H}}
\newcommand{\g}{\mathfrak{g}}
\newcommand{\C}{\mathbb{C}}
\newcommand{\A}{\mathcal{A}}
\newcommand{\M}{\mathcal{M}}
\newcommand{\HH}{\mathcal{H}}
\newcommand{\D}{\mathcal{D}}
\newcommand{\PP}{\mathcal{P}}
\newcommand{\K}{\mathcal{K}}
\newcommand{\RR}{\mathcal{R}}
\newcommand{\RRR}{\mathscr{R}}
\newcommand{\DDD}{\mathscr{D}}
\newcommand{\so}{\mathfrak{so}}
\newcommand{\gl}{\mathfrak{gl}}
\newcommand{\aaa}{\mathbb{A}}
\newcommand{\bbb}{\mathbb{B}}
\newcommand{\DD}{\mathsf{D}}
\newcommand{\ccc}{\bold{c}}
\newcommand{\sss}{\mathbb{S}}
\newcommand{\cdd}[1]{\[\begin{CD}#1\end{CD}\]}
\normalsize
\title[Differential Grothendieck--Riemann--Roch Theorem]{A condensed proof
of the differential Grothendieck--Riemann--Roch theorem}
\author{Man-Ho Ho}
\address{Department of Mathematics and Statistics\\ Boston University}
\email{homanho@bu.edu}
\curraddr{Department of Mathematics\\ Hong Kong Baptist University}
\email{homanho@hkbu.edu.hk}
\subjclass[2010]{Primary 19K56, 58J20, 19L50, 53C08}
\maketitle
\nocite{*}
\begin{center}
\emph{\small Dedicated to my father Kar-Ming Ho}
\end{center}
\begin{abstract}
We give a direct proof that the Freed--Lott differential analytic index is well defined
and a condensed proof of the differential Grothendieck--Riemann--Roch theorem. As a
byproduct we also obtain a direct proof that the $\R/\Z$ analytic index is well defined
and a condensed proof of the $\R/\Z$ Grothendieck--Riemann--Roch theorem.
\end{abstract}
\tableofcontents
\section{Introduction}

Differential $K$-theory, the differential extension of topological $K$-theory, has
been studied intensively in the last decade. Basically, a differential $K$-theory
class consists of an equivalence class $[E, h, \nabla, \phi]$ of a Hermitian bundle
with connection and a differential form, with the connection and form related
nontrivially.

The mathematical motivation for differential $K$-theory can be traced to Cheeger--Simons
differential characters \cite{CS85}, the unique differential extension of ordinary
cohomology \cite{SS08a}, and to work of Karoubi \cite{K86}. It is thus natural to
look for differential extensions of generalized cohomology theories such as topological
$K$-theory. Various definitions of differential $K$-theory have been given, notably by
Bunke--Schick \cite{BS09}, Freed--Lott \cite{FL10}, Hopkins--Singer \cite{HS05} and
Simons--Sullivan \cite{SS10}. By work of \cite{BS10a}, these models of differential
$K$-theory are all isomorphic. For a detailed survey of differential $K$-theory, see
\cite{BS10}.

The Atiyah-Singer family index theorem can be formulated as the equality of the
analytic and topological pushforward maps
$$\ind^{\an}=\ind^{\topo}:K(X)\to K(B).$$
Applying the Chern character, we get the Grothendieck--Riemann--Roch theorem, the
commutativity of
\cdd{K(X) @>\ch>> H^{\even}(X; \Q) \\ @V\ind^{\an}VV @VV\int_{X/B}\todd(X/B)\cup(\cdot) V
\\ K(B) @>>\ch> H^{\even}(B; \Q)}

Analogous theorems hold in differential $K$-theory. Bunke--Schick prove the differential
Grothendieck--Riemann--Roch theorem (dGRR) \cite[Theorem 6.19]{BS09}, i.e., for a proper
submersion $\pi:X\to B$ of even relative dimension, the following diagram is commutative:
\begin{equation}\label{eqspdgrr 1}
\begin{CD}
\wh{K}_{\BS}(X) @>\wh{\ch}_{\BS}>> \wh{H}^{\even}(X; \R/\Q) \\ @V\ind^{\an}_{\BS}VV
@VV\wh{\int_{X/B}}\wh{\todd}(\wh{\nabla}^{T^VX})\ast(\cdot) V \\ \wh{K}_{\BS}(B)
@>>\wh{\ch}_{\BS}> \wh{H}^{\even}(B; \R/\Q)
\end{CD}
\end{equation}
where $\wh{H}(X; \R/\Q)$ is the ring of differential characters, $\wh{\ch}_{\BS}$ is
the Bunke--Schick differential Chern character \cite[\S 6.2]{BS09}, $\ind^{\an}_{\BS}$
is the Bunke--Schick differential analytic index \cite[\S 3]{BS09} and
\dis{\wh{\int_{X/B}}\wh{\todd}(\wh{\nabla}^{T^VX})\ast} is a modified pushforward of
differential characters \cite[\S 6.4]{BS09}. The notation is explained more fully in
later sections. On the other hand, Freed--Lott prove the differential family index
theorem
\cite[Theorem 7.32]{FL10}
$$\ind^{\an}_{\FL}=\ind^{\topo}_{\FL}:\wh{K}_{\FL}(X)\to\wh{K}_{\FL}(B),$$
where $\ind^{\an}_{\FL}$ and $\ind^{\topo}_{\FL}$ are the Freed--Lott differential
analytic index \cite[Definition 3.11]{FL10} and the differential topological index
\cite[Definition 5.33]{FL10}. Applying the differential Chern character
$\wh{\ch}_{\FL}$ yields the dGRR \cite[Corollary 8.23]{FL10}. Since $\ind^{\an}_{\BS}=
\ind^{\an}_{\FL}$ \cite[Corollary 5.5]{BS09}, the two dGRR theorems are essentially
the same.

Both proofs of the dGRR are involved, and yield much more information than the dGRR
alone. In particular, the fact that $\ind^{\an}_{\FL}$ is well defined follows {\it a
posteriori} from the differential family index theorem. The main results of this paper
are a direct proof that $\ind^{\an}_{\FL}$ is well defined and a condensed proof of
dGRR. Note that without the direct proof that $\ind^{\an}_{\FL}$ is well defined we
cannot compute $\wh{\ch}_{\FL}(\ind^{\an}_{\FL}(\E))$ without using the differential
family index theorem.

We first prove these theorems in the special case where the family of kernels of the
Dirac operators has constant dimension, i.e., $\ker(\DD^E)\to B$ is a superbundle.
The proof of $\ind^{\an}_{\FL}$ is well defined makes use of the variational formula
of the Bismut-Cheeger eta form, and the proof of the dGRR relies on a result of Bismut
\cite[Theorem 1.15]{B05}, which allows us to shorten the existing proofs at the
expense of using this theorem. The general case follows from a standard perturbation
argument as in \cite[\S 7]{FL10}. It is stated in \cite[p. 23]{B05} that Bismut's
theorem extends to the general case.

J. Lott proves the equality \cite[Corollary 3]{L94}
$$\ind^{\an}_{\LL}=\ind^{\topo}_{\LL}:K^{-1}_{\LL}(X; \R/\Z)\to K^{-1}_{\LL}(B; \R/\Z)$$
of an analytic and topological index in his geometric model of $K^{-1}(X; \R/\Z)$. 
This index theorem and the corresponding GRR theorem are consequences of the Freed-Lott 
differential family index theorem. Thus we also obtain a direct proof that the analytic 
index $\ind^{\an}_{\LL}$ is well defined, and a condensed proof of the corresponding 
GRR theorem. Indeed, Bismut already states in \cite[p.17]{B05} (without proof) that
\cite[Theorem 1.15]{B05} implies this GRR theorem. For proofs of these theorems without 
using differential $K$-theory, see \cite{H12u}.

The next two sections contain the necessary background material. Section 2 reviews
Cheeger--Simons differential characters, their multiplication and some properties of
pushforward. Section 3 reviews Freed--Lott differential $K$-theory, the construction of
the Freed--Lott differential analytic index and the Freed--Lott differential Chern
character. The main results of the paper are proved in Section 4.
\section*{Acknowledgement}
The author would like to thank several people. First of all we would like to thank
Steven Rosenberg for suggesting this problem and many stimulating discussions. Second
we would like to thank Bai-Ling Wang for his comments on the Bismut-Cheeger eta form,
and Ulrich Bunke for kindly pointing out an error in Proposition \ref{prop 1} in a
previous version of this paper. Third, we would like to thank Jean-Michel Bismut
and Sebastian Goette for providing the author many valuable insights about the
variational formula of the Bismut-Cheeger eta form used in the proof of Proposition
\ref{prop 1}. Last but not least we would like to thank the referee for the his
helpful comments.
\section{Cheeger--Simons differential characters}
\subsection{Definition of differential characters}
We recall Cheeger--Simons differential characters \cite{CS85} with coefficients in
$\R/\Q$. Let $X$ be a manifold. The ring of differential characters of degree $k\geq 1$
is
$$\wh{H}^k(X; \R/\Q)=\set{f\in\ho(Z_{k-1}(X), \R/\Q)|\exists\omega_f\in\Omega^k(X)
\textrm{ such that }f\circ\partial=\ov{\omega_f}},$$
where $\ov{\ ^{}}:\Omega^k(X)\to C^k(X; \R/\Q)$ is an injective homomorphism defined
by \dis{\ov{\omega}(c_k):=\int_{c_k}\omega\mod\Q}. It is easy to show that $\omega_f$
is a closed $k$-form with periods in $\Q$ and is uniquely determined by $f\in\wh{H}^k
(X; \R/\Q)$. In the following hexagon, the diagonal sequences are exact, and every
triangle and square commutes \cite[Theorem 1.1]{CS85}:
\begin{equation}\label{eqspdgrr 5}
\xymatrix{\scriptstyle 0 \ar[dr] & \scriptstyle & \scriptstyle & \scriptstyle & \scriptstyle 0
\\ & \scriptstyle H^{k-1}(X; \R/\Q) \ar[rr]^{-B} \ar[dr]^{i_1} & \scriptstyle & \scriptstyle
H^k(X; \Q) \ar[ur] \ar[dr]^r & \scriptstyle \\ \scriptstyle H^{k-1}(X; \R) \ar[ur]^{\alpha}
\ar[dr]_{\beta} & \scriptstyle & \scriptstyle\wh{H}^k(X; \R/\Q) \ar[ur]^{\delta_2}
\ar[dr]^{\delta_1} & \scriptstyle & \scriptstyle H^k(X; \R) \\ \scriptstyle & \scriptstyle \frac{\Omega^{k-1}(X)}{\Omega^{k-1}_{\Q}(X)} \ar[rr]_{d} \ar[ur]^{i_2} & \scriptstyle &
\scriptstyle\Omega^k_{\Q}(X) \ar[ur]^s \ar[dr] & \scriptstyle \\ \scriptstyle 0 \ar[ur] &
\scriptstyle & \scriptstyle & \scriptstyle & \scriptstyle 0}
\end{equation}
The maps are defined as follows: $r$ is induced by $\Q\hookrightarrow\R$,
$$i_1([z])=z|_{Z_{k-1}(X)},~~~i_2(\omega)=\ov{\omega}|_{Z_{k-1}(X)},~~~\delta_1(f)
=\omega_f\textrm{ and }\delta_2(f)=[c],$$
where $[c]\in H^k(X; \Q)$ is the unique cohomology class satisfying $r[c]=[\omega_f]$,
and $\Omega^k_\Q(X)$ consists of closed forms with periods in $\Q$. (We will not use
the other maps.) The character diagram uniquely characterizes differential extension
of ordinary cohomology \cite{SS08a}.

Invariant polynomials for $\U(n)$ have associated characteristic classes and differential
characters. In particular, for a Hermitian vector bundle $E\to X$ with a metric $h$ and
a unitary connection $\nabla$, the differential Chern character is the unique natural
differential character \cite[Theorem 2.2]{CS85}
\begin{equation}\label{eqspdgrr 6}
\wh{\ch}(E, h, \nabla)\in\wh{H}^{\even}(X; \R/\Q)
\end{equation}
such that
$$\delta_1(\wh{\ch}(E, h, \nabla))=\ch(\nabla)\textrm{ and } \delta_2(\wh{\ch}
(E, h, \nabla))=\ch(E).$$
We will write $\wh{\ch}(E, h, \nabla)$ as $\wh{\ch}(E, \nabla)$ in the sequel.
\subsection{Multiplication of differential characters}
In \cite{CS85} multiplication of differential characters is defined. Let
$E:\Omega^{k_1}(X)\times\Omega^{k_2}(X)\to C^{k_1+k_2-1}(X; \R)$ be a natural chain
homotopy between the wedge product $\wedge$ and the cup product $\cup$, i.e., for
$\omega_i\in\Omega^{k_i}(X)$, we have
$$\delta E(\omega_1, \omega_2)+E(d\omega_1, \omega_2)+(-1)^{k_1}E(\omega_1, d\omega_2)
=\omega_1\wedge\omega_2-\omega_1\cup\omega_2$$
as cochains. Note that any two choices of $E$ are naturally chain homotopic. For
$f\in\wh{H}^{k_1}(X; \R/\Q)$ and $g\in\wh{H}^{k_2}(X; \R/\Q)$, define $f\ast g\in
\wh{H}^{k_1+k_2}(X; \R/\Q)$ by
$$f\ast g=\big(\wt{T_f\cup\omega_g}+(-1)^{k_1}\wt{\omega_f\cup T_g}+\wt{T_f\cup\delta
T_g}+\wt{E(\omega_f, \omega_g)}\big)|_{Z_{k_1+k_2-1}(X)},$$
where $T_f, T_g\in C^{k-1}(X, \R)$ are lifts of $f$ and $g$.
\begin{prop}\cite[Theorem 1.11]{CS85}\label{propspdgrr 1}
Let $f\in\wh{H}^{k_1}(X; \R/\Q)$ and $g\in\wh{H}^{k_2}(X; \R/\Q)$.
\begin{enumerate}
  \item $f\ast g$ is independent of the choice of the lifts $T_f$ and $T_g$,
  \item $f\ast(g\ast h)=(f\ast g)\ast h$ and $f\ast g=(-1)^{k_1k_2}g\ast f$,
  \item $\omega_{f\ast g}=\omega_f\wedge\omega_g$ and $c_{f\ast g}=c_f\cup c_g$. i.e.,
        $\delta_1$ and $\delta_2$ are ring homomorphisms,
  \item If $\phi:N\to M$ is a smooth map, then $\phi^*(f\ast g)=\phi^*(f)\ast\phi^*(g)$,
  \item If $\theta\in\Omega^\bullet(X)$, then $i_2(\theta)\ast f=i_2(\theta\wedge\omega_f)$,
  \item If $[c]\in H^\bullet(X; \R/\Q)$, then $f\ast i_1([c])=(-1)^{k_1}i_1([c_f]\cup[c])$.
\end{enumerate}
\end{prop}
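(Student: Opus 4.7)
The plan is to verify the six assertions by direct cochain manipulations starting from the defining formula, using three ingredients throughout: the coboundary Leibniz rule $\delta(a\cup b)=\delta a\cup b+(-1)^{|a|}a\cup\delta b$, the defining property $\delta E(\omega_f,\omega_g)=\omega_f\wedge\omega_g-\omega_f\cup\omega_g$ of the chain homotopy $E$ (valid on the nose because $d\omega_f=d\omega_g=0$), and the fact that a real lift $T_f$ of $f$ satisfies $\delta T_f-\ov{\omega_f}\in C^{k_1}(X;\Q)$.

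I would first establish (3) together with the assertion that $f\ast g$ is a well-defined character. Applied to a boundary $\partial c$, $(f\ast g)(\partial c)$ equals the $\R/\Q$-reduction of $\delta$ of the bracketed cochain evaluated on $c$. Expanding each of the four summands by the Leibniz rule and reducing mod $\Q$, the contributions involving the $\Q$-valued cocycles $c_f=\delta T_f-\ov{\omega_f}$ and $c_g$ all lie in $C^\bullet(X;\Q)$ and vanish, while the $E$-identity converts the remaining terms into $\ov{\omega_f\wedge\omega_g}(c)$. Hence $f\ast g$ is a character with $\omega_{f\ast g}=\omega_f\wedge\omega_g$, and tracking the rational cocycles on the nose through the same computation yields $c_{f\ast g}=c_f\cup c_g$.

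Statement (1) is then the exercise of verifying that if $T_f$ is replaced by $T_f+\delta a+q$ with $q\in C^{k_1-1}(X;\Q)$, the induced change in the formula is an exact $\R$-cochain plus a $\Q$-valued cochain; restricted to cycles in $Z_{k_1+k_2-1}(X)$, such a sum vanishes modulo $\Q$. Statement (4) is immediate once $E$ is chosen naturally with respect to pullback, which is standard. For (5), take $T_{i_2(\theta)}=\ov\theta$ with $\omega_{i_2(\theta)}=d\theta$; three of the four terms drop out after reduction mod $\Q$, and the remaining one equals $\ov{\theta\wedge\omega_f}|_Z=i_2(\theta\wedge\omega_f)$. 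For (6) note that $\omega_{i_1([c])}=0$, which kills every term involving $\omega_g$, and identify the surviving cochain with $(-1)^{k_1}i_1([c_f]\cup[c])$.

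The main obstacle is the associativity half of (2); graded commutativity follows from the corresponding property of $\cup$ together with a natural symmetry of $E$. For associativity one must compare $f\ast(g\ast h)$ and $(f\ast g)\ast h$, which a priori involve $E$ in the two different bracketings $E(\omega_f,\omega_g\wedge\omega_h)$ and $E(\omega_f\wedge\omega_g,\omega_h)$. The key input is that any two natural chain homotopies between $\wedge$ and $\cup$ are themselves chain homotopic; one promotes this to a secondary natural homotopy relating the two triple products and checks that the resulting discrepancy is a sum of an exact cochain and a $\Q$-valued cochain, hence vanishes on $Z_{k_1+k_2+k_3-1}(X)$ after reduction mod $\Q$. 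This secondary-homotopy step is the only nontrivial piece of the argument and is the heart of \cite[Theorem 1.11]{CS85}.
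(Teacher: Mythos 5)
The paper does not prove this proposition: it is quoted verbatim from Cheeger--Simons \cite[Theorem 1.11]{CS85} and used as a black box (only parts (2), (5) enter later, in the proof of Theorem \ref{thm 1}). So there is no internal proof to compare against; your proposal can only be measured against the original argument of \cite{CS85}, and in outline it does follow that argument: the governing principle that a real cochain of the form $\delta a + q$ with $q$ rational-valued dies in $\Hom(Z_{k-1}(X),\R/\Q)$, the order of attack (well-definedness and (3) first, then (1), (4), (5), (6), with associativity isolated as the hard step), and the use of a secondary natural chain homotopy for the associativity comparison are all faithful to \cite{CS85}.

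Two places in your plan are thinner than they should be. First, you dismiss graded commutativity as following ``from the corresponding property of $\cup$ together with a natural symmetry of $E$,'' but the cup product is \emph{not} graded-commutative at the cochain level --- it is so only up to a natural chain homotopy (the cup-$1$ product). Comparing $f\ast g$ with $(-1)^{k_1k_2}g\ast f$ therefore requires exactly the same secondary-homotopy mechanism you reserve for associativity, together with the observation that the discrepancy is exact plus rational on cycles; as written, this half of (2) has a gap. Second, in (5) the claim that ``three of the four terms drop out after reduction mod $\Q$'' is too quick: with $T_{i_2(\theta)}=\ov{\theta}$ and $\delta T_f=\ov{\omega_f}+c_f$, a term such as $\ov{\theta}\cup\delta T_f$ is a genuinely real-valued cochain (a rational cocycle cupped with a real cochain is not rational), so the unwanted terms cancel only against each other via the Leibniz rule and the $E$-identity, not individually. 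Relatedly, you should be aware that the four-term formula as transcribed in this paper appears to carry sign typos (a direct computation of $\delta$ of the bracketed cochain with these signs does not return $\ov{\omega_f\wedge\omega_g}$ mod $\Q$), so any verification of (3) and (5) must be done against the formula in \cite{CS85} itself rather than the one printed here.
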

\subsection{Pushforward of differential characters}
The pushforward of differential characters is defined in \cite[\S 3.4]{HS05}. We only
consider proper submersions $\pi:X\to B$ with closed fibers of relative dimension $n$,
where the definition \cite[\S 8.3]{FL10} is straightforward: for $k\geq n$,
$$\wh{\int_{X/B}}:\wh{H}^k(X; \R/\Q)\to\wh{H}^{k-n}(B;
\R/\Q), \ \left(\wh{\int_{X/B}}f\right) (z) = f(\pi^{-1}(z)).$$
Let \dis{\int_{X/B}} denote both the pushforward of forms and cohomology classes.
\begin{prop}\cite[\S 3.4]{HS05}\label{propspdgrr 2}
Let $f\in\wh{H}^k(X; \R/\Q)$, $[c]\in H^{k-1}(X; \R/\Q)$ and
\dis{\theta\in\frac{\Omega^{k-1}(X)}{\Omega^{k-1}_\Q(X)}}. Then
\begin{enumerate}
  \item \dis{\delta_1\bigg(\wh{\int_{X/B}}f\bigg)=\int_{X/B}\omega_f}.
  \item \dis{\delta_2\bigg(\wh{\int_{X/B}}f\bigg)=\int_{X/B}[c_f]}.
  \item \dis{\wh{\int_{X/B}}i_1([c])=i_1\bigg(\int_{X/B}[c]\bigg)}.
  \item \dis{\wh{\int_{X/B}}i_2(\theta)=i_2\bigg(\int_{X/B}\theta\bigg)}.
\end{enumerate}
\end{prop}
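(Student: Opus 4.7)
The strategy is to unpack both sides of each equation using the defining formula
$$\bigg(\wh{\int_{X/B}}f\bigg)(z) = f(\pi^{-1}(z))$$
together with the definitions of the maps in the character diagram (\ref{eqspdgrr 5}), and then to reduce each assertion to a Fubini-type identity for fiber integration of smooth forms combined with the chain-level relation $\partial\pi^{-1}(c)=\pi^{-1}(\partial c)$, which holds because $\pi$ is a proper submersion with \emph{closed} fibers (so there is no boundary contribution from the fibers).

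First I would prove (1), which is the source of the other three. For a smooth singular chain $c\in C_{k-n}(B)$,
$$\bigg(\wh{\int_{X/B}}f\bigg)(\partial c) = f(\pi^{-1}(\partial c)) = f(\partial\pi^{-1}(c)) = \ov{\omega_f}(\pi^{-1}(c)) = \int_{\pi^{-1}(c)}\omega_f\mod\Q = \int_c\int_{X/B}\omega_f\mod\Q,$$
where the third equality uses $f\circ\partial=\ov{\omega_f}$ and the last is Fubini. Hence the curvature of $\wh{\int_{X/B}}f$ equals $\int_{X/B}\omega_f$, giving (1). Part (4) is analogous but lighter: for $z\in Z_{k-n-1}(B)$, Fubini applied directly to $i_2(\theta)(c)=\int_c\theta\mod\Q$ shows that $(\wh{\int_{X/B}}i_2(\theta))(z)=\int_z\int_{X/B}\theta\mod\Q$, which is exactly $i_2(\int_{X/B}\theta)(z)$.

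For (3), choose a cocycle $z\in C^{k-1}(X;\R/\Q)$ representing $[c]$, so that $i_1([c])=z|_{Z_{k-1}(X)}$. The chain-level assignment $\sigma\mapsto z(\pi^{-1}(\sigma))$ defines a cocycle on smooth singular chains of $B$ whose cohomology class is the topological fiber integral $\int_{X/B}[c]$; restricting to cycles yields both sides of (3). Finally, (2) follows formally from (1) plus the uniqueness inherent in the hexagon (\ref{eqspdgrr 5}): by commutativity and (1), the class $\int_{X/B}[c_f]\in H^{k-n}(B;\Q)$ satisfies $r(\int_{X/B}[c_f])=\int_{X/B}[\omega_f]=[\delta_1(\wh{\int_{X/B}}f)]$, and this characterizing property identifies $\int_{X/B}[c_f]=\delta_2(\wh{\int_{X/B}}f)$.

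The main obstacle is making the chain-level identity $\partial\pi^{-1}(c)=\pi^{-1}(\partial c)$ precise: a smooth singular chain $c$ in $B$ does not have a literal preimage chain in $X$, so one needs a bookkeeping model (for instance stratifold preimages, or the ad hoc smooth-chain construction of \cite[\S 3.4]{HS05} and \cite[\S 8.3]{FL10}) in which to verify the sign-free boundary formula; the vanishing of fiber boundaries is what forces closedness of the fibers into the hypothesis. Once that is set up, every step above is a direct computation, and the four items are proved in the order (1), (4), (3), (2).
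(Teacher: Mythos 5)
The paper offers no proof of this proposition — it is quoted from \cite[\S 3.4]{HS05} — so there is nothing internal to compare against; your argument is the standard verification underlying that citation and it is correct: each item reduces, via the defining formula $(\wh{\int_{X/B}}f)(z)=f(\pi^{-1}(z))$, to Fubini for fiber integration together with the boundary identity $\partial\pi^{-1}(c)=\pi^{-1}(\partial c)$, and your deduction of (2) from (1) via the injectivity of $r$ in the hexagon is exactly how $\delta_2$ is characterized in the paper. You correctly flag the one genuine technical point, namely giving chain-level meaning to $\pi^{-1}$, which is precisely what the cited construction in \cite{HS05} and \cite[\S 8.3]{FL10} supplies.
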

\section{Freed--Lott differential $K$-theory}
\subsection{Definition of Freed--Lott differential $K$-theory}

In this subsection we review Freed--Lott differential $K$-theory \cite{FL10}.

The Freed--Lott differential $K$-group $\wh{K}_{\FL}(X)$ is the abelian group generated
by quadruples $\E=(E, h, \nabla, \phi)$, where $(E,h,\nabla)\to X$ is a complex vector
bundle with a hermitian metric $h$ and a unitary connection $\nabla$, and
\dis{\phi\in\frac{\Omega^{\odd}(X)}{\im(d)}}. The only relation is $\E_1=\E_2$ if and
only if there exists a generator $(F, h^F, \nabla^F, \phi^F)$ of $\wh{K}_{\FL}(X)$ such
that $E_1\oplus F\cong E_2\oplus F$ and $\phi_1-\phi_2=\CS(\nabla^{E_2}\oplus\nabla^F,
\nabla^{E_1}\oplus\nabla^F)$.

In the following hexagon, the diagonal sequences are exact, and every triangle and
square commutes \cite{FL10}:
\begin{equation}\label{eqspdgrr 20}
\xymatrix{\scriptstyle 0 \ar[dr] & \scriptstyle & \scriptstyle & \scriptstyle &
\scriptstyle 0 \\ & \scriptstyle K^{-1}_{\LL}(X; \R/\Z) \ar[rr]^{-B} \ar[dr]^{i} & 
\scriptstyle & \scriptstyle K(X; \Z) \ar[ur] \ar[dr]^{\ch_\R} & \scriptstyle \\ 
\scriptstyle H^{\odd}(X; \R) \ar[ur]^{\alpha} \ar[dr]_{\beta} & \scriptstyle & 
\scriptstyle\wh{K}_{\FL}(X) \ar[ur]^{\delta} \ar[dr]^{\ch_{\wh{K}_{\FL}}} & \scriptstyle 
& \scriptstyle H^{\even}(X; \R) \\ \scriptstyle & \scriptstyle \frac{\Omega^{\odd}
(X)}{\Omega^{\odd}_{\BU}(X)} \ar[rr]_{d} \ar[ur]^{j} & \scriptstyle & \scriptstyle
\Omega^{\even}_{\BU}(X) \ar[ur]^{\dr} \ar[dr] & \scriptstyle \\ \scriptstyle 0 \ar[ur] 
& \scriptstyle & \scriptstyle & \scriptstyle & \scriptstyle 0}
\end{equation}
where $\ch_\R:=r\circ\ch:K(X)\to H^{\even}(X; \R)$,
$$\Omega^{\bullet}_{\BU}(X):=\set{\omega\in\Omega^{\bullet}_{d=0}(X)|[\omega]\in\im
(\ch^\bullet:K^{-(\bullet\mod 2)}\to H^\bullet(X; \Q))},$$
where $\bullet\in\set{\even, \odd}$. The maps are defined as follows:
$$\delta(\E)=[E],~~~~\ch_{\wh{K}_{\FL}}(\E)=\ch(\nabla)+d\phi,~~~~j(\phi)=(0, 0, d,
\phi),$$
$i$ is the natural inclusion map, $\dr$ is the de Rham map, and the sequence of maps
$(\alpha, \beta, \ch_\R)$ can be regarded as the Bockstein sequence in $K$-theory as
we may identify $H^\bullet(X; \R)$ with $K^\bullet(X; \R)$ via the Chern character.

The Freed--Lott differential Chern character $\wh{\ch}_{\FL}:\wh{K}_{\FL}(X)\to
\wh{H}^{\even}(X; \R/\Q)$ is defined by
$$\wh{\ch}_{\FL}(\E)=\wh{\ch}(E, \nabla)+i_2(\phi),$$
where $\wh{\ch}(E, \nabla)$ is given in (\ref{eqspdgrr 6}), and $i_2$ is in
(\ref{eqspdgrr 5}).
\subsection{Freed--Lott differential analytic index}

The Freed--Lott differential analytic index of a generator $\E=(E, h, \nabla^E, \phi)
\in\wh{K}_{\FL}(X)$ is roughly given by the geometric construction of the
analytic index of $(E, h, \nabla)$ with a modified pushforward of the form $\phi$.

In more detail, let $\pi:X\to B$ be a proper submersion of even relative dimension $n$,
and let $T^VX\to X$ be the vertical tangent bundle, which is assumed to have a metric
$g^{T^VX}$. A given horizontal distribution $T^HX\to X$ and a Riemannian metric $g^{TB}$
on $B$ determine a metric on $TX\to X$  by $g^{TX}:=g^{T^VX}\oplus\pi^*g^{TB}$. If
$\nabla^{TX}$ is the corresponding Levi-Civita connection, then $\nabla^{T^VX}:=P\circ
\nabla^{TX}\circ P$ is a connection on $T^VX\to X$, where $P:TX\to T^VX$ is the
orthogonal projection. $T^VX\to X$ is assumed to have a $\spin^c$ structure. Denote by
$S^VX\to X$ the spinor$^c$ bundle associated to the characteristic Hermitian line bundle
$L^VX\to X$ with a unitary connection $\nabla^{L^VX}$. Define a connection
$\wh{\nabla}^{T^VX}$ on $S^VX\to X$ by $\wh{\nabla}^{T^VX}:=\nabla^{T^VX}\otimes
\nabla^{L^V}$, where $\nabla^{T^VX}$ also denotes the lift of $\nabla^{T^VX}$ to the
local spinor bundle. The Todd form $\todd(\wh{\nabla}^{T^VX})$ of $S^VX\to X$ is
defined by
$$\todd(\wh{\nabla}^{T^VX}):=\wh{A}(\nabla^{T^VX})\wedge e^{\frac{1}{2}c_1
(\nabla^{L^VX})}.$$
For $k\geq n$, the modified pushforward of forms $\pi_*:\Omega^k(X)\to\Omega^{k-n}(B)$
\cite[(3.2)]{FL10} is defined by
$$\pi_*(\phi)=\int_{X/B}\todd(\wh{\nabla}^{T^VX})\wedge\phi.$$
It induces a map, still denoted by \dis{\pi_*:\frac{\Omega^{\odd}(X)}{\im(d)}\to
\frac{\Omega^{\odd}(B)}{\im(d)}}.

We briefly recall the definition of the Bismut--Cheeger eta form \dis{\wt{\eta}(\E)\in
\frac{\Omega^{\odd}(B)}{\im(d)}} associated to $\E\in\wh{K}_{\FL}(X)$. With the above
setup, consider the infinite-rank superbundle $\pi_*E\to B$, where the fibers at each
$b\in B$ is given by
$$(\pi_*E)_b:=\Gamma(X_b, (S^VX\otimes E)|_{X_b}).$$
Recall that $\pi_*E\to B$ admits an induced Hermitian metric and a connection
$\nabla^{\pi_*E}$ compatible with the metric \cite[\S 9.2, Proposition 9.13]{BGV}. For
each $b\in B$, the canonically constructed Dirac operator
$$\DD^E_b:\Gamma(X_b, (S^VX\otimes E)|_{X_b})\to\Gamma(X_b, (S^VX\otimes E)|_{X_b})$$
gives a family of Dirac operators, denoted by $\DD^E:\Gamma(X, S^VX\otimes E)\to\Gamma
(X, S^VX\otimes E)$. Assume the family of kernels $\ker(\DD^E_b)$ has locally constant
dimension, i.e., $\ker(\DD^E)\to B$ is a finite-rank Hermitian superbundle. Let
$P:\pi_*E\to\ker(\DD^E)$ be the orthogonal projection, $h^{\ker(\DD^E)}$ be the
Hermitian metric on $\ker(\DD^E)\to B$ induced by $P$, and
$\nabla^{\ker(\DD^E)}:=P\circ\nabla^{\pi_*E}\circ P$ be the connection on $\ker(\DD^E)
\to B$ compatible to $h^{\ker(\DD^E)}$.

The (scaled) Bismut-superconnection $\aaa_t:\Omega(B, \pi_*E)\to\Omega(B, \pi_*E)$
\cite[Definition 3.2]{B86} (see also \cite[Proposition 10.15]{BGV} and \cite[(1.4)]{D91}),
is defined by
$$\aaa_t:=\sqrt{t}\DD^E+\nabla^{\pi_*E}-\frac{c(T)}{4\sqrt{t}},$$
where $c(T)$ is the Clifford multiplication by the curvature 2-form of the fiber bundle.
The Bismut--Cheeger eta form $\wt{\eta}(\E)$ \cite[(2.26)]{BC89} (see also \cite{D91}
and \cite[Theorem 10.32]{BGV}) is defined by
$$\wt{\eta}(\E):=\frac{1}{2\sqrt{\pi}}\int^\infty_0\frac{1}{\sqrt{t}}\str\bigg
(\frac{d\aaa_t}{dt}e^{-\aaa_t^2}\bigg)dt.$$
It satisfies
$$d\wt{\eta}(\E)=\int_{X/B}\todd(\wh{\nabla}^{T^VX})\wedge\ch(\nabla^E)-\ch(\nabla^{\ker
(\DD^E)}).$$
The Freed--Lott differential analytic index $\ind^{\an}_{\FL}:\wh{K}_{\FL}(X)\to
\wh{K}_{\FL}(B)$ is
$$\ind^{\an}_{\FL}(\E)=(\ker(\DD^E), h^{\ker(\DD^E)}, \nabla^{\ker(\DD^E)}, \pi_*(\phi)+
\wt{\eta}(\E)).$$
\section{Main Results}
In this section we give a direct proof that $\ind^{\an}_{\FL}$ is well defined and
give a condensed proof of the dGRR. We first recall a theorem of Bismut \cite{B05}.
In the setup of \S3.2, with the fibers $\spin$ and $\ker(\DD^E)\to B$ assumed to form
a superbundle, we have
\begin{equation}\label{eqspdgrr 8}
\wh{\ch}(\ker(\DD^E), \nabla^{\ker(\DD^E)})+i_2(\wt{\eta}(\E))=\wh{\int_{X/B}}
\wh{\wh{A}}(T^VX, \nabla^{T^VX})\ast\wh{\ch}(E, \nabla^E)
\end{equation}
\cite[Theorem 1.15]{B05}. If the fibers are only $\spin^c$, (\ref{eqspdgrr 8})
has the obvious modification
\begin{equation}\label{eqspdgrr 9}
\wh{\ch}(\ker(\DD^E), \nabla^{\ker(\DD^E)})+i_2(\wt{\eta}(\E))=\wh{\int_{X/B}}\wh{\todd}
(T^VX, \wh{\nabla}^{T^VX})\ast\wh{\ch}(E, \nabla^E),
\end{equation}
for $\wh{\todd}(T^VX, \wh{\nabla}^{T^VX})\in\wh{H}^{\even}(X; \R/\Q)$ the differential
character associated to the Todd form and the Todd class as in (\ref{eqspdgrr 6}), and
similarly for $\wh{\wh{A}}(T^VX, \nabla^{T^VX})$. We will write $\wh{\todd}(T^VX,
\wh{\nabla}^{T^VX})$ as $\wh{\todd}(\wh{\nabla}^{T^VX})$ in the sequel. Note that
(\ref{eqspdgrr 8}) and (\ref{eqspdgrr 9}) extend to the general case where $\ker(\DD^E)
\to B$ does not form a bundle \cite[p.23]{B05}.
\subsection{Freed--Lott differential analytic index}
In this subsection we prove that $\ind^{\an}_{\FL}$ is well defined.
\begin{prop}\label{prop 1}
Let $\pi:X\to B$ be a proper submersion with closed $\spin^c$ fibers of even relative
dimension. If $\E=\F\in\wh{K}_{\FL}(X)$, then
$$\ind^{\an}_{\FL}(\E)=\ind^{\an}_{\FL}(\F).$$
\end{prop}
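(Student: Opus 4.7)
The plan is to unpack the equivalence $\E=\F$ and produce a witness of $\ind^{\an}_{\FL}(\E)=\ind^{\an}_{\FL}(\F)$ in $\wh{K}_{\FL}(B)$, with the additivity and variational formula of the Bismut--Cheeger eta form as the main analytic inputs. By the definition of the relation in $\wh{K}_{\FL}(X)$, the hypothesis provides a generator $\G=(G,h^G,\nabla^G,\phi^G)$ on $X$ together with a Hermitian isomorphism $E\oplus G\cong F\oplus G$ and the form identity $\phi^E-\phi^F=\CS(\nabla^F\oplus\nabla^G,\nabla^E\oplus\nabla^G)$ modulo $\im(d)$. The natural candidate stabilizer on $B$ witnessing $\ind^{\an}_{\FL}(\E)=\ind^{\an}_{\FL}(\F)$ is $(\ker(\DD^G),h^{\ker(\DD^G)},\nabla^{\ker(\DD^G)},0)$, read off from the data that build $\ind^{\an}_{\FL}(\G)$.

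At the bundle level there is essentially nothing to do: additivity of the twisted Dirac operator gives $\ker(\DD^{E\oplus G})=\ker(\DD^E)\oplus\ker(\DD^G)$ and similarly for $F$, and the Hermitian isomorphism $E\oplus G\cong F\oplus G$ over $X$ induces the required Hermitian isomorphism $\ker(\DD^E)\oplus\ker(\DD^G)\cong\ker(\DD^F)\oplus\ker(\DD^G)$ of superbundles over $B$.

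The content of the proposition is the form-level identity
$$\pi_*(\phi^E)+\wt{\eta}(\E)-\pi_*(\phi^F)-\wt{\eta}(\F)\equiv\CS\bigl(\nabla^{\ker(\DD^F)}\oplus\nabla^{\ker(\DD^G)},\,\nabla^{\ker(\DD^E)}\oplus\nabla^{\ker(\DD^G)}\bigr)$$
modulo $\im(d)$. Two ingredients do the work. First, the additivity $\wt{\eta}(\E\oplus\G)=\wt{\eta}(\E)+\wt{\eta}(\G)$, coming from additivity of the Bismut superconnection in the coefficient bundle, rewrites both sides in terms of $\wt{\eta}(\E\oplus\G)$ and $\wt{\eta}(\F\oplus\G)$. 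Second, after using the isomorphism to view $E\oplus G$ and $F\oplus G$ as the same bundle carrying two unitary connections, the variational formula of the Bismut--Cheeger eta form applies: along a smooth path $\nabla_s$ of unitary connections on a Hermitian bundle with locally constant-dimensional $\ker(\DD)$,
$$\wt{\eta}(E,\nabla_1)-\wt{\eta}(E,\nabla_0)\equiv\pi_*\bigl(\CS(\nabla_1,\nabla_0)\bigr)-\CS\bigl(\nabla^{\ker}_1,\nabla^{\ker}_0\bigr)$$
modulo $\im(d)$. Feeding the defining relation for $\phi^E-\phi^F$ into this formula and rearranging produces the desired equation.

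The main obstacle will be the sign-sensitive bookkeeping: the variational formula above, the antisymmetry $\CS(\nabla_0,\nabla_1)\equiv-\CS(\nabla_1,\nabla_0)$ modulo $\im(d)$, and the sign convention in the definition of the relation on $\wh{K}_{\FL}(X)$ all have to combine correctly. A secondary point is that the proposition sits in this section under the running assumption that each $\ker(\DD^E)\to B$ has locally constant dimension, so that the variational formula applies directly; the general case is reduced to this one by the standard perturbation argument mentioned in the introduction.
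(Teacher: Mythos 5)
Your proposal is correct and follows essentially the same route as the paper: the kernel-bundle isomorphism $\ker(\DD^E)\oplus\ker(\DD^G)\cong\ker(\DD^F)\oplus\ker(\DD^G)$ plus the variational formula of the Bismut--Cheeger eta form (which in the paper is stated in the stabilized form, absorbing your additivity step) are exactly the inputs used there, and your sign bookkeeping matches the paper's. The only cosmetic difference is that you verify the defining relation in $\wh{K}_{\FL}(B)$ directly with stabilizer $\ker(\DD^G)$, while the paper first passes through the exact hexagon (\ref{eqspdgrr 20}) to write the difference of indices as $j(\omega)$ and then shows $\omega$ is exact.
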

\begin{proof}
Let $f:=\ind^{\an}_{\FL}(\E)-\ind^{\an}_{\FL}(\F)$. Since there exists a generator $\G$
in $\wh{K}_{\FL}(X)$ such that $E\oplus G\cong F\oplus G$ and $\phi^E-\phi^F=\CS(\nabla^F
\oplus\nabla^G, \nabla^E\oplus\nabla^G)$ up to an exact form, it follows that $\ker(\DD^E)
\oplus\ker(\DD^G)\cong\ker(\DD^F)\oplus\ker(\DD^G)$ and therefore $\delta(f)=[\ker(\DD^E)]
-[\ker(\DD^F)]=0$ in $K(B)$. By (\ref{eqspdgrr 20}) there exists a unique \dis{\omega\in
\frac{\Omega^{\odd}(B)}{\Omega^{\odd}_{\BU}(B)}} such that $j(\omega)=f$. Since
\begin{displaymath}
\begin{split}
&~~~~\bigg(\ker(\DD^E), h^{\ker(\DD^E)}, \nabla^{\ker(\DD^E)}, \wt{\eta}(\E)+\int_{X/B}
\todd(\wh{\nabla}^{T^VX})\wedge\phi^E\bigg)\\
&=\bigg(\ker(\DD^F), h^{\ker(\DD^F)}, \nabla^{\ker(\DD^F)}, \wt{\eta}(\F)+\int_{X/B}
\todd(\wh{\nabla}^{T^VX})\wedge\phi^F\bigg)+j(\omega),
\end{split}
\end{displaymath}
it follows that, up to an exact form
\begin{displaymath}
\begin{split}
&~~~~\CS(\nabla^{\ker(\DD^E)}\oplus\nabla^{\ker(\DD^G)}, \nabla^{\ker(\DD^F)}\oplus
\nabla^{\ker(\DD^G)})\\
&=\CS(\nabla^{\ker(\DD^E)}\oplus\nabla^{\ker(\DD^G)}, \nabla^{\ker(\DD^F)}\oplus
\nabla^{\ker(\DD^G)}\oplus d)\\
&=\wt{\eta}(\F)+\int_{X/B}\todd(\wh{\nabla}^{T^VX})\wedge\phi^F+\omega-\wt{\eta}(\E)
-\int_{X/B}\todd(\wh{\nabla}^{T^VX})\wedge\phi^E\\
&=\omega+\wt{\eta}(\F)-\wt{\eta}(\E)+\int_{X/B}\todd(\wh{\nabla}^{T^VX})\wedge\CS
(\nabla^E\oplus\nabla^G, \nabla^F\oplus\nabla^G),
\end{split}
\end{displaymath}
and hence
\begin{displaymath}
\begin{split}
\omega&=\CS(\nabla^{\ker(\DD^E)}\oplus\nabla^{\ker(\DD^G)}, \nabla^{\ker(\DD^F)}\oplus
\nabla^{\ker(\DD^G)})+\wt{\eta}(\E)-\wt{\eta}(\F)\\
&~~~~+\int_{X/B}\todd(\wh{\nabla}^{T^VX})\wedge\CS(\nabla^F\oplus\nabla^G, \nabla^E
\oplus\nabla^G)
\end{split}
\end{displaymath}
in \dis{\frac{\Omega^{\odd}(B)}{\Omega^{\odd}_{\BU}(B)}}. We prove that $\omega\in
\Omega^{\odd}_{\BU}(B)$. Since the variational formula of the Bismut-Cheeger eta form
is given by\footnote{I would like to thank J.-M. Bismut and S. Goette for pointing out
this formula.}
\begin{displaymath}
\begin{split}
\wt{\eta}(\F)-\wt{\eta}(\E)&=\CS(\nabla^{\ker(\DD^E)}\oplus\nabla^{\ker(\DD^G)},
\nabla^{\ker(\DD^F)}\oplus\nabla^{\ker(\DD^G)})\\
&~~~~~+\int_{X/B}\todd(\wh{\nabla}^{T^VX})\wedge\CS(\nabla^F\oplus\nabla^G, \nabla^E\oplus
\nabla^G)\mod\Omega^{\odd}_{\exact}(B),
\end{split}
\end{displaymath}
where $\Omega^{\odd}_{\exact}(B)$ denotes the ring of exact odd forms on $B$, it
follows that $\omega\in\Omega^{\odd}_{\exact}(B)\subseteq\Omega^{\odd}_{\BU}(B)$.
Thus $j(\omega)=0$.
\end{proof}
\subsection{Differential Grothendieck--Riemann--Roch theorem}
In this subsection we give a condensed proof of the dGRR.
\begin{thm}\label{thm 1}
Let $\pi:X\to B$ be a proper submersion with closed $\spin^c$ fibers of even relative
dimension. Then the following diagram is commutative:
\cdd{\wh{K}_{\FL}(X) @>\wh{\ch}_{\FL}>> \wh{H}^{\even}(X; \R/\Q) \\ @V\ind^{\an}_{\FL}VV
@VV\wh{\int_{X/B}}\wh{\todd}(\wh{\nabla}^{T^VX})\ast (\cdot) V \\ \wh{K}_{\FL}(B)
@>>\wh{\ch}_{\FL}> \wh{H}^{\even}(B; \R/\Q)}
i.e., for $\E=(E, h, \nabla, \phi)\in\wh{K}_{\FL}(X)$, we have
$$\wh{\ch}_{\FL}(\ind^{\an}_{\FL}(\E))=\wh{\int_{X/B}}\wh{\todd}(\wh{\nabla}^{T^VX})\ast
\wh{\ch}_{\FL}(\E).$$
\end{thm}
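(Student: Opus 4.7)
The plan is to expand both sides of the asserted identity using the definitions of $\wh{\ch}_{\FL}$ and $\ind^{\an}_{\FL}$, and then to reduce the theorem to two separate statements, one a direct application of Bismut's theorem (\ref{eqspdgrr 9}) and the other a formal manipulation with the $\ast$ product and the pushforward of differential characters. Well-definedness of both sides is ensured by Proposition \ref{prop 1}.

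Unpacking the LHS gives
\begin{equation*}
\wh{\ch}_{\FL}(\ind^{\an}_{\FL}(\E))=\wh{\ch}(\ker(\DD^E),\nabla^{\ker(\DD^E)})+i_2\big(\pi_*(\phi)+\wt{\eta}(\E)\big),
\end{equation*}
and distributing $\ast$ across $\wh{\ch}_{\FL}(\E)=\wh{\ch}(E,\nabla)+i_2(\phi)$ splits the RHS as
\begin{equation*}
\wh{\int_{X/B}}\wh{\todd}(\wh{\nabla}^{T^VX})\ast\wh{\ch}(E,\nabla)\ +\ \wh{\int_{X/B}}\wh{\todd}(\wh{\nabla}^{T^VX})\ast i_2(\phi).
\end{equation*}
The first summand is precisely $\wh{\ch}(\ker(\DD^E),\nabla^{\ker(\DD^E)})+i_2(\wt{\eta}(\E))$ by equation (\ref{eqspdgrr 9}); this absorbs all the analytic content of the theorem.

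It remains to identify the second summand with $i_2(\pi_*(\phi))$. Both $\wh{\todd}(\wh{\nabla}^{T^VX})$ and $i_2(\phi)$ lie in the even part of $\wh{H}^{\bullet}(X;\R/\Q)$, so Proposition \ref{propspdgrr 1}(2) lets me commute them without a sign, and Proposition \ref{propspdgrr 1}(5) collapses the product to $i_2\big(\phi\wedge\todd(\wh{\nabla}^{T^VX})\big)$. Proposition \ref{propspdgrr 2}(4) then moves $i_2$ past $\wh{\int_{X/B}}$, yielding $i_2\big(\int_{X/B}\todd(\wh{\nabla}^{T^VX})\wedge\phi\big)=i_2(\pi_*(\phi))$. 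Summing the two pieces reproduces the LHS.

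The main obstacle, if any, is bookkeeping rather than conceptual: the heavy lifting sits in Bismut's identity (\ref{eqspdgrr 9}), whose invocation is the only non-formal ingredient. Everything else follows from the character diagram, the ring axioms for $\ast$, and naturality of pushforward. One should only take care that the degree parities used in invoking Proposition \ref{propspdgrr 1}(2) are correct, and that the equality $\pi_*(\phi)=\int_{X/B}\todd(\wh{\nabla}^{T^VX})\wedge\phi$ coming from the definition of the modified pushforward of forms matches the form produced by combining parts (5) of Proposition \ref{propspdgrr 1} with part (4) of Proposition \ref{propspdgrr 2}.
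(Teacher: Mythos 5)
Your proof is correct and follows essentially the same route as the paper: both reduce the $\phi$-dependent part to the identity $i_2\big(\int_{X/B}\todd(\wh{\nabla}^{T^VX})\wedge\phi\big)=\wh{\int_{X/B}}\wh{\todd}(\wh{\nabla}^{T^VX})\ast i_2(\phi)$ via Proposition \ref{propspdgrr 1}(2),(5) and Proposition \ref{propspdgrr 2}(4), and both let Bismut's identity (\ref{eqspdgrr 9}) carry the remaining analytic content. The only difference is presentational (you match terms directly, the paper first shows the difference of the two sides is independent of $\phi$), so there is nothing to add.
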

\begin{proof}
Observe that
\begin{equation}\label{eqspdgrr 13}
f:=\wh{\ch}_{\FL}(\ind^{\an}_{\FL}(\E))-\wh{\int_{X/B}}\wh{\todd}(\wh{\nabla}^{T^VX})
\ast\wh{\ch}_{\FL}(\E)
\end{equation}
does not depend on the choice of $\phi$ in $\E=(E, h, \nabla, \phi)\in\wh{K}_{\FL}(X)$.
To see this, note that
\begin{equation}\label{eqspdgrr 14}
\wh{\ch}_{\FL}(\ind^{\an}_{\FL}(\E))=\wh{\ch}(\ind^{\an}(E, h, \nabla^E))+
i_2\bigg(\int_{X/B}\todd(\wh{\nabla}^{T^VX})\wedge\phi\bigg)+i_2(\wt{\eta}),
\end{equation}
and
\begin{equation}\label{eqspdgrr 15}
\begin{split}
&\qquad\wh{\int_{X/B}}\wh{\todd}(\wh{\nabla}^{T^VX})\ast\wh{\ch}_{\FL}(\E)\\
&=\wh{\int_{X/B}}\wh{\todd}(\wh{\nabla}^{T^VX})\ast\wh{\ch}(E, \nabla^E)+
\wh{\int_{X/B}}\wh{\todd}(\wh{\nabla}^{T^VX})\ast i_2(\phi).
\end{split}
\end{equation}
Using Prop.~\ref{propspdgrr 1}(2) and Prop.~\ref{propspdgrr 2}(4), (5), we get
\begin{eqnarray}\label{eqspdgrr 16}
i_2\bigg(\int_{X/B}\todd(\wh{\nabla}^{T^VX})\wedge\phi\bigg)
&=& \wh{\int_{X/B}}i_2(\todd(\wh{\nabla}^{T^VX})\wedge\phi)\nonumber\\
&=&\wh{\int_{X/B}}i_2(\phi\wedge\todd(\wh{\nabla}^{T^VX}))\nonumber\\
&=&\wh{\int_{X/B}}i_2(\phi)\ast\wh{\todd}(\wh{\nabla}^{T^VX})\\
&=&\wh{\int_{X/B}}\wh{\todd}(\wh{\nabla}^{T^VX})\ast i_2(\phi).\nonumber
\end{eqnarray}
It follows from (\ref{eqspdgrr 14}), (\ref{eqspdgrr 15}) and (\ref{eqspdgrr 16}) that
\begin{equation}\label{eqspdgrr 17}
f=\wh{\ch}(\ind^{\an}(E, h, \nabla^E))+i_2(\wt{\eta}(\E))-\wh{\int_{X/B}}
\wh{\todd}(\wh{\nabla}^{T^VX})\ast\wh{\ch}(E, \nabla^E).
\end{equation}
Thus proving (\ref{eqspdgrr 13}) is zero is equivalent to proving (\ref{eqspdgrr 17})
is zero, which follows from (\ref{eqspdgrr 9}).
\end{proof}
\bibliographystyle{amsplain}
\bibliography{MBib}
\end{document}